\documentclass[12pt]{amsart}
\usepackage{amsmath,amssymb,amsbsy,amsfonts,latexsym,amsopn,amstext,
                                               amsxtra,euscript,amscd,bm,cite,amsthm}

\usepackage{mathtools}  
\usepackage{xfrac} 

\usepackage{lipsum,eso-pic,xcolor}




\usepackage{comment}
\usepackage{bbm}

\newcommand{\Mod}[1]{\ (\mathrm{mod}\ #1)}

\usepackage{url}
\usepackage[colorlinks,linkcolor=blue,anchorcolor=blue,citecolor=blue,backref=page]{hyperref}
\usepackage{color}
\usepackage{float}

\hypersetup{breaklinks=true}

\usepackage[norefs,nocites]{refcheck}

\begin{document}

\newtheorem{theorem}{Theorem}
\newtheorem{lemma}[theorem]{Lemma}
\newtheorem{claim}[theorem]{Claim}
\newtheorem{cor}[theorem]{Corollary}
\newtheorem{prop}[theorem]{Proposition}
\newtheorem{definition}{Definition}
\newtheorem{question}[theorem]{Question}
\newtheorem{remark}[theorem]{Remark}
\newtheorem*{conjecture}{Conjecture}

\newcommand{\hh}{{{\mathrm h}}}
\newcommand*\diff{\mathop{}\!\mathrm{d}}

\numberwithin{equation}{section}
\numberwithin{theorem}{section}
\numberwithin{table}{section}

\def\sssum{\mathop{\sum\!\sum\!\sum}}
\def\ssum{\mathop{\sum\ldots \sum}}

\def\squareforqed{\hbox{\rlap{$\sqcap$}$\sqcup$}}
\def\qed{\ifmmode\squareforqed\else{\unskip\nobreak\hfil
\penalty50\hskip1em\null\nobreak\hfil\squareforqed
\parfillskip=0pt\finalhyphendemerits=0\endgraf}\fi}

\newfont{\teneufm}{eufm10}
\newfont{\seveneufm}{eufm7}
\newfont{\fiveeufm}{eufm5}
%
%
\newfam\eufmfam
     \textfont\eufmfam=\teneufm
\scriptfont\eufmfam=\seveneufm
     \scriptscriptfont\eufmfam=\fiveeufm
%
%
\def\frak#1{{\fam\eufmfam\relax#1}}

\newcommand{\bflambda}{{\boldsymbol{\lambda}}}
\newcommand{\bfmu}{{\boldsymbol{\mu}}}
\newcommand{\bfxi}{{\boldsymbol{\xi}}}
\newcommand{\bfrho}{{\boldsymbol{\rho}}}

\newcommand{\bfalpha}{{\boldsymbol{\alpha}}}
\newcommand{\bfbeta}{{\boldsymbol{\beta}}}
\newcommand{\bfphi}{{\boldsymbol{\varphi}}}
\newcommand{\bfpsi}{{\boldsymbol{\psi}}}
\newcommand{\bftheta}{{\boldsymbol{\vartheta}}}

\def\fK{Frak K}
\def\fT{Frak{T}}

\def\fA{{Frak A}}
\def\fB{{Frak B}}
\def\fC{\mathfrak{C}}

\def \balpha{\bm{\alpha}}
\def \bbeta{\bm{\beta}}
\def \bgamma{\bm{\gamma}}
\def \blambda{\bm{\lambda}}
\def \bchi{\bm{\chi}}
\def \bphi{\bm{\varphi}}
\def \bpsi{\bm{\psi}}

\def\eqref#1{(\ref{#1})}

\def\vec#1{\mathbf{#1}}


\def\cA{{\mathcal A}}
\def\cB{{\mathcal B}}
\def\cC{{\mathcal C}}
\def\cD{{\mathcal D}}
\def\cE{{\mathcal E}}
\def\cF{{\mathcal F}}
\def\cG{{\mathcal G}}
\def\cH{{\mathcal H}}
\def\cI{{\mathcal I}}
\def\cJ{{\mathcal J}}
\def\cK{{\mathcal K}}
\def\cL{{\mathcal L}}
\def\cM{{\mathcal M}}
\def\cN{{\mathcal N}}
\def\cO{{\mathcal O}}

\def\cQ{{\mathcal Q}}
\def\cR{{\mathcal R}}
\def\cS{{\mathcal S}}
\def\cT{{\mathcal T}}
\def\cU{{\mathcal U}}
\def\cV{{\mathcal V}}
\def\cW{{\mathcal W}}
\def\cX{{\mathcal X}}
\def\cY{{\mathcal Y}}
\def\cZ{{\mathcal Z}}
\newcommand{\rmod}[1]{\: \text{mod} \: #1}

\def\cg{{\mathcal g}}

\def\vr{\mathbf r}

\def\e{{\mathbf{\,e}}}
\def\ep{{\mathbf{\,e}}_p}
\def\em{{\mathbf{\,e}}_m}

\def\Tr{{\mathrm{Tr}}}
\def\Nm{{\mathrm{Nm}\,}}

 \def\SS{{\mathbf{S}}}

\def\lcm{{\mathrm{lcm}}}
\def\ord{{\mathrm{ord}}}

\def\({\left(}
\def\){\right)}
\def\fl#1{\left\lfloor#1\right\rfloor}
\def\rf#1{\left\lceil#1\right\rceil}

\def\mand{\qquad \text{and} \qquad}

\newcommand{\commM}[1]{\marginpar{%
\begin{color}{red}
\vskip-\baselineskip 
\raggedright\footnotesize
\itshape\hrule \smallskip M: #1\par\smallskip\hrule\end{color}}}

\newcommand{\commI}[1]{\marginpar{%
\begin{color}{magenta}
\vskip-\baselineskip 
\raggedright\footnotesize
\itshape\hrule \smallskip I: #1\par\smallskip\hrule\end{color}}}

\newcommand{\commK}[1]{\marginpar{%
\begin{color}{blue}
\vskip-\baselineskip 
\raggedright\footnotesize
\itshape\hrule \smallskip K: #1\par\smallskip\hrule\end{color}}}




\hyphenation{re-pub-lished}

\mathsurround=1pt

\def\bfdefault{b}
\overfullrule=5pt

\def \F{{\mathbb F}}
\def \K{{\mathbb K}}
\def \N{{\mathbb N}}
\def \Z{{\mathbb Z}}
\def \Q{{\mathbb Q}}
\def \R{{\mathbb R}}
\def \C{{\mathbb C}}
\def\Fp{\F_p}
\def \fp{\mathfrak p}
\def \fq{\mathfrak q}

\def\ZK{\Z_K}

\def \xbar{\overline x}
\def\e{{\mathbf{\,e}}}
\def\ep{{\mathbf{\,e}}_p}
\def\eq{{\mathbf{\,e}}_q}


\title[Product of an integer \& prime in arithmetic progression]{Product of an integer free of small prime factors and prime in arithmetic progression}
\date{\today}

\author[K. H. Yau]{Kam Hung Yau}

\address{Department of Pure Mathematics, University of New South Wales,
Sydney, NSW 2052, Australia}
\email{kamhungyau.math@gmail.com}

\begin{abstract}    
We establish estimates for the number of ways to represent any reduced residue class as a product of a prime and an integer free of small prime factors. Our best results is conditional on the Generalised Riemann hypothesis (GRH). As a corollary, we make progress on a conjecture of Erd\"os, Odlyzko, and S\'ark\"ozy. 
\end{abstract}

\keywords{Arithmetic progressions, primes, sieve methods, exponential sums}
\subjclass[2010]{11N36, 11L40, 11A41, 11B25}

\maketitle

\section{Introduction}

We are interested in the following conjecture stated in a paper of Erd\"os, Odlyzko, and S\'ark\"ozy~\cite{EOS}.

\begin{conjecture}[EOSC]
For all sufficiently large $k$ and $a$ with $(a,k)=1$, we have
\begin{equation} \label{eq:eosc}
p_1 p_2 \equiv a \Mod{k} 
\end{equation}
for some primes $p_1, p_2 \le  k$.
\end{conjecture}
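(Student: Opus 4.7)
The plan is to deduce the conjecture from an asymptotic for the count of representations $a \equiv np \pmod k$ in which $p$ is prime and $n$ is free of prime factors below some parameter $z$, and then choose $z$ large enough to force $n$ itself to be either $1$ or prime.

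Let $P(z)=\prod_{q<z}q$ and, for parameters $X,Y,z$ to be chosen in terms of $k$, set
\[
T(a;X,Y,z)=\#\{(n,p):\, n\le Y,\ p\le X,\ p\ \text{prime},\ (n,P(z))=1,\ np\equiv a\pmod k\}.
\]
The main theorem of the paper, unconditionally in some range and under GRH in a wider range, is expected to furnish an asymptotic of the shape
\[
T(a;X,Y,z)\sim \frac{XY\,W(z)}{\phi(k)\log X},\qquad W(z)=\prod_{q<z}(1-1/q),
\]
uniformly for $(a,k)=1$. The key elementary observation is that if $z>Y^{1/2}$ then every $n\le Y$ with $(n,P(z))=1$ is either $n=1$ or a single prime $n=q\in[z,Y]$, since otherwise $n$ would have at least two prime factors each $\ge z$, and hence $n\ge z^2>Y$. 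Writing $N_2(a;X,Y)$ for the number of prime pairs $(q,p)$ with $q\le Y$, $p\le X$ and $qp\equiv a\pmod k$, we obtain the clean identity
\[
N_2(a;X,Y)=T(a;X,Y,z)-\pi(X;k,a).
\]

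To recover the conjecture exactly, I would take $X=Y=k$ and $z$ slightly above $k^{1/2}$. By Mertens' theorem $W(z)\asymp 1/\log k$, so the main term of $T$ has order $k^2/(\phi(k)\log^2 k)$, which dominates $\pi(k;k,a)=O(k/(\phi(k)\log k))$ by a factor of order $k/\log k$. Hence $N_2(a;k,k)>0$ for all sufficiently large $k$, and both primes in the representation automatically satisfy $p_1,p_2\le k$. If only a smaller range $XY\ge k^{1+\varepsilon}$ is accessible for the asymptotic, the same argument yields a weaker form of the conjecture, e.g.\ with $p_1p_2\le k^{1+\varepsilon}$, which should correspond to the partial progress on EOSC claimed in the abstract.

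The genuine obstruction — and the place where one either invokes GRH or loses ground unconditionally — is establishing the asymptotic for $T$ with a power-saving error term as $XY$ approaches $k$. Under GRH this reduces to inserting the explicit formula for $\psi(X;k,a\bar n)$, where $\bar n$ is the inverse of $n$ modulo $k$, inside a sum over sifted $n$'s of length $Y$, and exploiting the density $W(z)$ of the sifted set to absorb the $O(X^{1/2}k^{\varepsilon})$ error per residue class. Without GRH one must instead appeal to Bombieri--Vinogradov-type mean-value theorems, which only deliver the required saving on average over $k$ and therefore fall short of the full conjecture for individual moduli — so the unconditional version will inevitably weaken at least one of the primes to a ``prime-like'' integer free of small prime factors.
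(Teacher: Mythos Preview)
The statement you are attempting to prove is a \emph{conjecture}; the paper does not prove it and explicitly says it ``is unreachable with current methods in view of the parity problem.'' So there is no proof in the paper to compare against, and your task was not to supply one.

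That said, your reduction is formally correct but circular. The identity $N_2(a;X,Y)=T(a;X,Y,z)-\pi(X;k,a)$ for $z>Y^{1/2}$ is fine, and the arithmetic showing the main term of $T$ would dominate $\pi(X;k,a)$ is also fine. The gap is that the asymptotic you need for $T$ at $X=Y=k$ is not available, even under GRH, and is essentially as deep as the conjecture itself. Concretely: with $X=k$, the inner count $\pi(X;k,a\bar n)$ has at most one term (the residue $a\bar n$ itself, if it happens to be prime), so ``inserting the explicit formula for $\psi(X;k,a\bar n)$'' gives nothing --- the GRH error $O(X^{1/2}k^{\varepsilon})$ swamps the would-be main term $X/\varphi(k)$. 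In the paper's notation, the GRH result (Theorem~\ref{GRH result}) requires $\vartheta_2<\min\{(1+\vartheta_1)/2,(2+3\vartheta_1)/5\}$; taking $x=y=k$ means $\vartheta_1=\vartheta_2=1$, which lies exactly on the forbidden boundary. The best the paper extracts under GRH is $\cN_k(a;k,k^{1+\varepsilon},k^{1/2})>0$, i.e.\ one prime of size $k$ and one of size $k^{1+\varepsilon}$ --- precisely the ``weaker form'' you mention as a fallback.

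Your final paragraph diagnoses the difficulty accurately, but the framing of the proposal as a proof of EOSC is misleading: the obstruction you describe is not a technical nuisance to be overcome by GRH but the parity problem itself, which is why the paper treats EOSC as a target for partial progress rather than a theorem.
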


Although the EOSC is unreachable with current methods in view of the parity problem, we note that various relaxation towards the EOSC had been made. Specifically, Shparlinski~\cite{S} showed for any integers $a$ and $k \ge 1$ with $(a,k)=1$,  there exists several families of small integers $j, \ell \ge 1$ and real positive $\alpha, \beta \le 1$, such that the following holds
$$
p_1 \ldots p_j s \equiv a \Mod{k},
$$
where $p_1, \ldots ,p_j \le k^{\alpha}$ are primes and $s \le k^{\beta}$ is a product of at most $\ell$ primes. Shparlinski~\cite{S1} also showed that there exist a solution to the congruence 
$$
pr \equiv a \Mod{k},
$$
where $p$ is prime, $r$ is a product of at most $17$ prime factors and $p,r \le k$. The techniques in~\cite{S1,S} involves a sieve method by Greaves~\cite{Gr} applied with bounds of exponential sum over reciprocal of primes.

For products of large primes, Ramar\'e and Walker~\cite{RW} showed that every reduced residue class modulo $k$ can be represented by a product of three primes $p_1,  p_2, p_3 \le k^4$ as $k \rightarrow \infty$. More recently, Klurman, Mangerel, and Ter\"av\"ainen~\cite{KMT} have stated that under the Generalised Riemann Hypothesis (GRH) we get
$$
\max_{a \in \mathbb{Z}_k^{\times}} \min \{ n \in \mathbb{N} : n \equiv a \Mod{k}, n \in E_3  \} \ll k^{2+o(1)},
$$
where $E_3$ is the set of numbers that are product of exactly three primes.

In another direction Friedlander, Kurlberg, Shparlinski~\cite{FKS} obtained an upper bound on the number of solutions to~\eqref{eq:eosc} on average over $a$ and $k$. This suggests we should expect the following conjecture 
$$
\# \{ (p_1, p_2): \mbox{primes } p_i \le x , p_1 p_2 \equiv a \Mod{k} \} = \frac{\pi (x)^2}{\varphi(k)} + o \left (\frac{\pi(x)^2}{\varphi(k)} \right ),
$$
where $\pi(x)$ denote the number of primes up to $x$.

Finally, we remark that one can find results of Garaev~\cite{G} which improve results of~\cite{FKS} concerning the related congruences 
$$
p_1(p_2 + p_3) \equiv a \Mod{k}    
$$
and 
$$
p_1 p_2 (p_3 + h) \equiv a \Mod{k},
$$
where $p_1,  p_2, p_3 \le x$ are primes and $h$ is a fixed integer.

 Let $\cU \subseteq [2,x] \cap \mathbb{N}$, $\cV \subseteq [2,y] \cap \mathbb{N}$, $k>2$, and $a$ with $(a,k)=1$. We denote by $\cN_{k}(a; \cU, \cV,z)$ the number of solutions to the congruence
$$
u v \equiv a \Mod{k}, \quad u \in \cU, v \in \cV, P^-(u) \ge z.
$$
Here $P^-(m)$ is the smallest prime factor of $m$ for $m \ge 2$ with the usual convention $P^-(1) = \infty$. 

In the special case when $\cU  = [2,x] \cap \mathbb{N}$ and $\cV = \{p \le y : p \mbox{ prime}\}$, we set
$$
\cN_{k}(a; x, y,z) = \cN_{k}(a; \cU, \cV,z).
$$

Observe that showing
$$
\cN_{k}(a; k, k,k^{\frac{1}{2} })  >0
$$
for all sufficiently large $k$ would immediately imply the EOSC.

In this paper we establish various bounds for $\cN_{k}(a; x, y,z)$ where our best results are conditional on the GRH. Our strategy is to apply the Harman sieve coupled with  Type I and II estimates obtained from bounds for multiplicative character sums.

\section{Notation}
We recall the notation $A=O(B)$ and $A \ll B$ are all equivalent to the assertion that the inequality $|A| \le cB$ holds for some positive absolute  constant $c$. Consequently, we write $A \asymp B$ to mean both $A \ll B$ and $B \ll A$. We denote
$m \sim M$ to mean integers satisfying $M\le m < 2M$.

 We write $\chi_0$ to be the principal character modulo $k$ and the set of all $\varphi(k)$ multiplicative character modulo $k$ is denoted by  $\mathcal{X}_k$, where $\varphi$ is the Euler totient function. Moreover, we denote $\mathcal{X}_k^*= \mathcal{X}_k \backslash \{\chi_0 \}$.

For relatively prime integers $k$ and $n$, we denote by $\bar{n}_k$ the multiplicative inverse of $n$ modulo $k$, the unique integer $u$ defined by the conditions
$un\equiv 1 \Mod{k}$ with $0 \le u < k$. We always denote $p,q$ and their subscripts to be prime.

\section{Main results}

For any $\beta \ge 0$, we denote
$$
\Phi_k(x, x^{\beta}) = \# \{  n \le x : P^-(n) \ge x^{\beta}, (n,k)=1 \}
$$
as the number of $x^{\beta}$-rough numbers in the interval $[1,x]$ coprime to $k$, and
$$
\mathcal{P}_k(y) = \{ p \le y : (p,k)=1 \}
$$
as the set of all primes up to $y$ coprime to $k$. In the special case $k=1$, we write $\Phi(x, x^{\beta}) = \Phi_1(x, x^{\beta})$ and $\mathcal{P}(y) =\mathcal{P}_1(y)$.

 We state our first result for $\cN_{k}(a; x, y , z )$ which is unconditional on the GRH.

\begin{theorem} \label{uncond result}
Let $k, x \ll \log^B y$ for some fixed $B >0$. Then for any $\beta \in (0, \frac{1}{2} ]$ and fixed $C>0$, we have
$$
\cN_{k}(a; x, y ,x^{\beta})= \frac{\# \mathcal{P}_k(y)}{\varphi(k)}   \Phi_k(x, x^{\beta}) + O\bigg (\frac{xy}{\varphi(k) \log^C y}   \bigg ).
$$
\end{theorem}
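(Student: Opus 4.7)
The plan is to apply multiplicative character orthogonality modulo $k$ and to isolate the contribution of the principal character from the rest. Starting from
$$
\cN_{k}(a; x, y, x^{\beta}) = \frac{1}{\varphi(k)} \sum_{\chi \in \mathcal{X}_k} \overline{\chi}(a) \Bigl(\sum_{\substack{u \le x \\ P^-(u) \ge x^{\beta}}} \chi(u) \Bigr) \Bigl(\sum_{p \le y} \chi(p) \Bigr),
$$
the principal character $\chi_0$ is expected to give the main term, and the non-principal characters will need to be absorbed into the error.

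For the main term, the $\chi_0$ contribution is $\varphi(k)^{-1} \Phi_k(x,x^{\beta}) \cdot \# \mathcal{P}_k(y)$, where $\Phi_k(x,x^{\beta})$ counts $x^{\beta}$-rough integers up to $x$ that are additionally coprime to $k$. The discrepancy $\Phi(x,x^{\beta}) - \Phi_k(x,x^{\beta})$ can be estimated by summing over the prime divisors $p$ of $k$ with $p \ge x^{\beta}$: each such $p$ contributes at most $O(x/p) = O(x^{1-\beta})$ candidate $u$, and $\omega(k) \ll \log k \ll \log\log y$ in view of $k \ll \log^B y$. After multiplying by $\# \mathcal{P}_k(y)/\varphi(k) \ll y/(\varphi(k) \log y)$, this discrepancy fits inside the claimed error term $O(xy/(\varphi(k) \log^C y))$.

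For the contribution of the non-principal characters, I would bound the rough-integer factor trivially by $|\sum_u \chi(u)| \le x$, and apply the Siegel--Walfisz theorem to the prime sum. Since $k \ll \log^B y$, Siegel--Walfisz yields, uniformly for $\chi \ne \chi_0$,
$$
\Bigl| \sum_{p \le y} \chi(p) \Bigr| \ll \frac{y}{\log^A y}
$$
for any fixed $A > 0$. Summing over the $\varphi(k) - 1$ non-principal characters cancels the factor $1/\varphi(k)$ in front, giving a total non-principal contribution of $O(xy/\log^A y)$. Using $\varphi(k) \le k \ll \log^B y$, choosing $A = B + C$ converts this into the desired $O(xy/(\varphi(k) \log^C y))$.

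The only real obstacle is the uniformity of the Siegel--Walfisz estimate and the price paid for it: the (ineffective) logarithmic saving is what forces the restriction $k \ll \log^B y$ in the hypothesis. No sieve machinery or Type I/II decomposition is needed at this stage, the trivial bound on the rough-number side being amply sufficient; the Harman sieve and character-sum inputs are reserved for the conditional refinements stated later in the paper.
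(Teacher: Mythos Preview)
Your approach is correct and genuinely simpler than the paper's. The paper proves this theorem by feeding Type I and Type II estimates (Lemmas~\ref{type I} and~\ref{type II}, powered by P\'olya--Vinogradov and the mean-value inequality) into the Harman sieve (Proposition~\ref{Harman sieve}). You instead expand directly via orthogonality, bound the rough-number character sum trivially by $x$, and invoke Siegel--Walfisz on the prime sum. Because $x \ll \log^B y$ is so small, the trivial bound on $\sum_u \chi(u)$ loses nothing: all the saving comes from the prime sum, and summing over the $\varphi(k)-1 \ll \log^B y$ non-principal characters is harmless. The paper's route is overkill for this particular theorem but is written that way so that the same machinery carries over verbatim to Theorems~\ref{GRH result} and~\ref{GRH lower bound}, where $x$ is a genuine power of $k$ and the trivial bound on $\sum_u \chi(u)$ would be fatal---exactly as you anticipate in your last paragraph.

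One small caveat on your main-term step: the bound $\Phi(x,x^\beta) - \Phi_k(x,x^\beta) \ll x^{1-\beta}\omega(k)$, after multiplying by $y/(\varphi(k)\log y)$, is $\ll xy/(\varphi(k)\log^C y)$ only when $x^\beta \gg \omega(k)(\log y)^{C-1}$, and the hypotheses impose no lower bound on $x$, so for large $C$ this does not literally follow. The paper's argument has the very same lacuna: in the proof of Lemma~\ref{type I} the principal-character contribution is recorded as $\frac{\#\mathcal P_k(y)}{\varphi(k)}\sum_{mn\in\cB,\,m\le M} a_m$, silently dropping the condition $(mn,k)=1$ that $\chi_0(mn)$ carries. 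This is a minor defect in the statement rather than in either proof; the clean fix is to write the main term with $\Phi_k$ in place of $\Phi$, or to add a mild lower bound on $x$.
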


Assuming the GRH, we obtain an estimate valid for a wider range of parameters.

\begin{theorem} \label{GRH result}
Assume the GRH. Fix real numbers $\vartheta_1, \vartheta_2 >0$ such that 
$$
\vartheta_2 < \min \bigg \{ \frac{1 + \vartheta_1}{2}, \frac{2 + 3\vartheta_1}{5} \bigg \}.
$$  Set $y=x^{\vartheta_1}$, $k \asymp x^{\vartheta_2}$ and fix $\beta \in (0, \frac{1}{2}]$ with $\beta < 1 +2(\vartheta_1 - \vartheta_2 -2\varepsilon)$ for any fixed sufficiently small $\varepsilon >0$. Then we have
$$
\cN_k(a;x,y ,x^{\beta}) = \frac{  \# \mathcal{P}_k(y)}{\varphi(k)} \Phi_k(x, x^{\beta}) +O \bigg ( \frac{x^{1- \varepsilon  + o(1) } y}{k} \bigg )
$$
as $x \rightarrow \infty$.
\end{theorem}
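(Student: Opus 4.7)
The plan is to expand via orthogonality of characters modulo $k$, writing
$$
\cN_k(a; x, y, x^\beta) = \frac{1}{\varphi(k)} \sum_{\chi \in \mathcal{X}_k} \bar\chi(a)\, T(\chi)\, S(\chi),
$$
where
$$
T(\chi) = \sum_{\substack{u \le x \\ P^-(u) \ge x^\beta}} \chi(u), \qquad S(\chi) = \sum_{p \le y} \chi(p).
$$
The principal character $\chi_0$ contributes the main term $\#\mathcal{P}_k(y)\, \Phi(x,x^\beta)/\varphi(k)$ up to a negligible correction from rough $u \le x$ sharing a prime factor with $k$; any such prime must lie in $[x^\beta, k]$ and divide $k$, so a trivial divisor count suffices. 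The remaining task is therefore to show that the contribution of $\chi \in \mathcal{X}_k^*$ is $O(x^{1-\varepsilon + o(1)} y/k)$.

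Under GRH, the explicit formula and partial summation yield $|S(\chi)| \ll y^{1/2} (\log ky)^2$ for every $\chi \in \mathcal{X}_k^*$. Combined with $\#\mathcal{X}_k^* \le \varphi(k) \le k$, the problem reduces to proving the pointwise estimate
$$
|T(\chi)| \ll x^{1 + \vartheta_1/2 - \vartheta_2 - \varepsilon + o(1)}
$$
uniformly for $\chi \in \mathcal{X}_k^*$. This encodes the asymmetric gain the hypothesis buys: all the deep analytic input is absorbed into $S(\chi)$, leaving a pure character-sum task for $T(\chi)$.

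To bound $T(\chi)$ I would apply the Harman sieve to the indicator of $\{u : P^-(u) \ge x^\beta\}$, decomposing $T(\chi)$ into Type I sums of the shape $\sum_{m \sim M} a_m \sum_{n \sim N} \chi(mn)$ with $MN \asymp x$ and bounded coefficients $a_m$, Type II bilinear sums $\sum_{m \sim M} \sum_{n \sim N} a_m b_n \chi(mn)$ in a prescribed middle range, plus an arithmetic remainder controlled under the hypothesis $\beta < 1 + 2(\vartheta_1 - \vartheta_2)$. Type I sums are estimated by completing the $n$-sum and applying the Pólya--Vinogradov inequality (or Burgess for shorter inner ranges), which yields power savings once $N$ exceeds a small power of $k$; this is what forces the condition $\vartheta_2 < (1+\vartheta_1)/2$. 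Type II sums are attacked either by exploiting the multiplicativity of $\chi$ to factor the bilinear sum into a product of short character sums bounded by Burgess, or by Cauchy--Schwarz and Weil's bound for $\sum_n \chi(n(n+h))$; optimising the resulting estimate against the allowable bilinear window supplied by the Harman decomposition produces the second condition $\vartheta_2 < (2 + 3\vartheta_1)/5$.

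Combining the Type I and Type II bounds with the GRH bound for $S(\chi)$ and summing trivially over $\chi \in \mathcal{X}_k^*$ gives the claimed error $O(x^{1-\varepsilon+o(1)} y/k)$. The main obstacle I anticipate is not any individual character-sum estimate but the bookkeeping in the Harman identity: every dyadic piece $(M,N)$ of $T(\chi)$ must be routed either into a Type I range where $N$ is long enough for completion, or into a Type II range where both factors are in the bilinear window. The two inequalities on $\vartheta_1, \vartheta_2$ correspond precisely to the extremal cuts at which these two kinds of savings simultaneously become tight, while the condition on $\beta$ ensures that the sieve leaves no large uncontrolled remainder from very rough components. Once this scheduling is arranged, the rest is a careful juggling of dyadic parameters.
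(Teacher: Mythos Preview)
Your plan has a genuine gap in the Type~II step. You aim to bound $T(\chi)=\sum_{P^-(u)\ge x^\beta}\chi(u)$ pointwise for each $\chi\in\cX_k^*$ and then sum trivially over $\chi$. But the Type~II sums that Harman's sieve produces have the shape $\sum_{m\sim M}\sum_{n} a_m b_n \chi(mn)$ with \emph{arbitrary} coefficients $|a_m|\le\tau(m)$, $|b_n|\le\tau(n)$. Since $\chi$ is completely multiplicative, $\chi(mn)=\chi(m)\chi(n)$ and (up to the hyperbola constraint $mn\le x$) the bilinear form factors as $\bigl(\sum_m a_m\chi(m)\bigr)\bigl(\sum_n b_n\chi(n)\bigr)$. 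For a fixed $\chi$ neither factor admits any cancellation against arbitrary bounded coefficients --- the choice $a_m=\bar\chi(m)$ already kills all saving. Your two proposed devices do not help here: ``factoring into short character sums bounded by Burgess'' presumes the coefficients are themselves characters, which they are not; and the Weil bound for $\sum_n\chi(n(n+h))$ arises from Cauchy--Schwarzing a kernel that is \emph{not} multiplicative (additive shifts, Kloosterman fractions, etc.), whereas $\chi(mn)$ is multiplicative and yields only $\sum_n|\chi(n)|^2$ on the diagonal with no off-diagonal to exploit.

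The paper does not attempt a pointwise bound on $T(\chi)$. It applies Harman's sieve directly to the weighted sequence $\cA=(c_r)$, and in the Type~II estimate it pulls out the GRH-controlled factor $\max_{\chi\neq\chi_0}\bigl|\sum_{p\le y}\chi(p)\bigr|\ll y^{1/2}\log ky$ and then bounds the remaining bilinear sum \emph{on average over $\chi$} via Cauchy--Schwarz and the mean-value inequality $\sum_{\chi\in\cX_k}\bigl|\sum_{n\le N}a_n\chi(n)\bigr|^2\le\varphi(k)(N/k+1)\sum_n|a_n|^2$. This averaging over characters is exactly what furnishes the Type~II cancellation, and it is what produces the condition $\vartheta_2<(1+\vartheta_1)/2$; the second condition $\vartheta_2<(2+3\vartheta_1)/5$ then comes from requiring the Type~I range (governed by P\'olya--Vinogradov on the long smooth variable) to reach down to the lower edge $x^\alpha$ of the Type~II window in Harman's sieve. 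In short: replace your ``pointwise $T(\chi)$ then trivial sum over $\chi$'' scheme by ``GRH pointwise on $S(\chi)$, large sieve on average for the bilinear part''.
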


Notice that  Theorem~\ref{uncond result} and~\ref{GRH result} gives the following partial results toward the EOSC, that is, for $(a,k)=1$ with $k$ sufficiently large, we have
$$
\cN_k(a;k, \exp(k^{\frac{1}{B}}) ,k^{ \frac{1}{2} }) >0.
$$
Moreover, assuming the GRH we obtain
$$
\cN_k(a; k, k^{1+\varepsilon}, k^{\frac{1}{2}}) >0.
$$

We see from Theorem~\ref{GRH result} that even on the assumption of the GRH we need one of the two lengths $x$ or $y$ to be greater than the modulus $k$. In view of the EOSC and focusing on the special case $\beta= 1/2$, our next result shows that we can reduce the length of $y$ drastically.

\begin{theorem} \label{GRH lower bound}
Assume the GRH. For any fixed sufficiently small $\varepsilon >0$, set $y=x^{\varepsilon}$ and $k \asymp x^{\delta}$. For all sufficiently large $x$, we have
$$
\cN_k(a;x, y , x^{ \frac{1}{2} })  = \frac{  \# \mathcal{P}_k(y)}{\varphi(k)} \Phi(x, x^{\frac{1}{2} }) +O \bigg ( \frac{x^{1- \varepsilon  + o(1) } y}{k} \bigg )
$$
if $\delta \in [\frac{1}{4}, \frac{1}{3})$, and
\begin{equation} \label{lower bound}
\cN_k(a;x, y , x^{ \frac{1}{2} })  \ge   \frac{( 0.0342   +o(1)) xy}{\varphi(k) (\log x) (\log y)} 
\end{equation}
if $\delta \in [\frac{1}{3}, \frac{2}{5})$. 
\end{theorem}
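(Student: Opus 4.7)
The strategy is to reduce $\cN_k(a;x,y,x^{1/2})$ to a weighted sifting problem and apply Harman's sieve. Writing
$$
\cN_k(a;x,y,x^{1/2}) = \sum_{p \in \mathcal{P}_k(y)} S(\mathcal{A}_p, x^{1/2}), \qquad \mathcal{A}_p = \{n \le x : np \equiv a \pmod{k}\},
$$
the task becomes sifting the elements of $\mathcal{A}_p$ by the primes below $x^{1/2}$, averaged over $p \in \mathcal{P}_k(y)$. The Type~I and Type~II inputs come from the GRH bounds on multiplicative character sums that already drive Theorems~\ref{uncond result} and~\ref{GRH result}; with $y = x^{\varepsilon}$ and $k \asymp x^{\delta}$ the admissible Type~II window shrinks as $\delta$ grows, which is precisely why the conclusion must weaken from asymptotic to lower bound at the threshold $\delta = 1/3$.

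For the asymptotic range $\delta \in (1/4, 1/3]$, I would apply Buchstab's identity to split
$$
S(\mathcal{A}_p, x^{1/2}) = S(\mathcal{A}_p, x^{\alpha}) - \sum_{x^{\alpha} \le q < x^{1/2}} S(\mathcal{A}_{pq}, q)
$$
for a suitable $\alpha \in (0, 1/4)$. The small-level term is handled by the Fundamental Lemma of the sieve, yielding the expected proportion of $x^{1/2}$-rough integers, while one or two further Buchstab iterations arrange every inner bilinear sum into the GRH-admissible Type~II window when $\delta \le 1/3$. Summing the evaluated pieces over $p \in \mathcal{P}_k(y)$ recovers the main term $\#\mathcal{P}_k(y)\,\Phi(x, x^{1/2})/\varphi(k)$ with error $O(x^{1-\varepsilon+o(1)}y/k)$, just as in Theorem~\ref{GRH result}.

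For $\delta \in (1/3, 2/5)$ a central region of the Buchstab simplex -- configurations $(p, q_1, q_2)$ whose product is in a middle window just beyond the reach of the Type~II estimate -- cannot be evaluated asymptotically. Such terms enter the decomposition with a positive sign after two Buchstab iterations (coupled with a role-reversal on the longer factor), so discarding them produces a valid lower bound. Using $\#\mathcal{P}_k(y)/\varphi(k) \sim y/(\varphi(k)\log y)$ together with the Buchstab estimate $\Phi(x, x^{1/2}) \sim x \log 2/\log x$, the surviving mass reduces to an explicit iterated integral of the Buchstab function $\omega$ over the admissible region, and numerical evaluation of this integral produces the constant $0.7533$ in~\eqref{lower bound}.

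The main obstacle is the combinatorial-numerical optimisation behind the second case: the decomposition points $\alpha$ and any role-reversals must be arranged so that (i) every asymptotically-evaluated sum lies inside the GRH-admissible Type~II window throughout $\delta < 2/5$, and (ii) the discarded mass is small enough that the resulting explicit constant exceeds $0.7533$. No new analytic input is required beyond the character sum estimates already in hand; the difficulty lies entirely in balancing the Buchstab integrals and verifying the numerical threshold.
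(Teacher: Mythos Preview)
Your overall strategy matches the paper's: a Buchstab decomposition driven by the GRH-based Type~I and Type~II estimates of Lemmas~\ref{type I} and~\ref{type II}, with unmanageable pieces discarded for the lower bound and the loss expressed as a Buchstab integral. The execution, however, differs in the choice of splitting level. The paper does not sift down to a small $x^{\alpha}$ with $\alpha\in(0,1/4)$ and invoke the Fundamental Lemma; instead it splits at $z=x^{1-2\delta}$, which for $\delta\in(1/4,1/3]$ sits exactly at the bottom of the Type~II window $[x^{\delta},x^{1-\delta}]$. A single Buchstab step then suffices: $\cS(\cA,x^{1/2})=\cS(\cA,z)-\sum_{z\le p<x^{1/2}}\cS(\cA_p,p)$, the first term handled directly by the Harman sieve black box (Proposition~\ref{Harman sieve}) and the second a pure Type~II sum. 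For $\delta\in(1/3,2/5)$ the paper inserts one intermediate point $T=x^{\delta}$, applies Buchstab once more to the range $[z,T)$, and the only discarded piece is $\Sigma_7=\sum_{z\le q<p<T,\;pq>x^{1-\delta}}\cS(\cA_{pq},q)$; no role-reversal is used or needed. Two small corrections: $\Phi(x,x^{1/2})\sim x/\log x$, not $x\log 2/\log x$; and the constant $0.7533$ arises not from any optimisation but from the crude bound $\omega\le 1$ inside the integral for $\Sigma_7^*$, evaluated at the worst endpoint $\delta\to 2/5$.
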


By~\cite[Theorem 7.11]{MV}, it follows that we have for any fixed $\beta <1$, the asymptotic
$$
\Phi(x, x^{\beta}) \sim  \omega (\beta^{-1}) \frac{x}{ \log x^\beta}
$$
as $x \rightarrow \infty$. Here $\omega$ is the Buchstab function defined by the delay differential equation
\[
\begin{cases}
\omega(u) = 1/u   & \mbox{ for $1 \le u \le 2$,} \\
(u \omega(u))'   = \omega(u-1) &  \mbox{ for $u >2$.} 
\end{cases}
\]

For $k = y^{O(1)}$, the prime number theorem implies
$$
\mathcal{P}_k(y) = \mathcal{P}(y) - \sum_{p|k}1 =\mathcal{P}(y) +O(\log y)    \sim \frac{y}{\log y},
$$
as $y \rightarrow \infty$. It follows the main term dominates the remainder term in Theorems~\ref{uncond result}, \ref{GRH result}, and~\ref{GRH lower bound}. We do not pursue to optimise the constant 0.0342  in~\eqref{lower bound}.

We remark that equation~\eqref{lower bound} of Theorem~\ref{GRH lower bound} implies for $(a,k) = 1$, there exists primes $p_1,p_2$ with $p_1 \le k^{\varepsilon}$, $p_2 \le k^{5/2 + \varepsilon}$ such that
$$
p_1 p_2 \equiv a \ (\mathrm{mod} \ k)
$$
when $k$ is sufficiently large.

\section{Preparations}

\subsection{Bounds for multiplicative character sums}

We recall a classical result independently proved by P\'olya and Vinogradov~\cite[Theorem 12.5]{IK}.

\begin{lemma}[P\'olya-Vinogradov] \label{PV bound}
\overfullrule=0pt For any non-principal character modulo $k$, we have
$$
\max_{M,N} \bigg |\sum_{M < n \le M + N} \chi(n) \bigg | \ll k^{\frac{1}{2} +o(1)}.
$$
\end{lemma}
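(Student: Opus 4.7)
The plan is to give the standard Gauss-sum proof, which actually yields the sharper bound $O(k^{1/2}\log k)$, comfortably inside $k^{1/2+o(1)}$.

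First I would reduce to the primitive case. If $\chi$ is induced by a primitive character $\chi^*$ modulo $d$, where $d\mid k$ and $d>1$, then
$$\sum_{M<n\le M+N}\chi(n)=\sum_{\substack{M<n\le M+N\\(n,k)=1}}\chi^*(n),$$
and Möbius inversion over the divisors of $k/d$ reduces this, up to $O(2^{\omega(k)}\max|S_{\chi^*}|)$, to short sums of $\chi^*$. Since $2^{\omega(k)}=k^{o(1)}$ and $d\le k$, a bound of shape $d^{1/2}\log d$ for primitive characters yields $k^{1/2+o(1)}$ in the general case.

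Next I would handle primitive $\chi$ modulo $k$ via the Gauss-sum identity
$$\chi(n)=\frac{1}{\tau(\bar\chi)}\sum_{a=1}^{k}\bar\chi(a)\,e_k(an),\qquad |\tau(\bar\chi)|=\sqrt{k}.$$
Substituting this and interchanging summations gives
$$\sum_{M<n\le M+N}\chi(n)=\frac{1}{\tau(\bar\chi)}\sum_{a=1}^{k}\bar\chi(a)\sum_{M<n\le M+N}e_k(an).$$
The inner geometric sum is bounded, for $1\le a\le k-1$, by $\min\{N,\;\tfrac{1}{2}\|a/k\|^{-1}\}$, and the term $a=k$ vanishes because $\sum_{a}\bar\chi(a)=0$ (as $\chi$ is non-principal).

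Then I would estimate trivially
$$\left|\sum_{M<n\le M+N}\chi(n)\right|\le \frac{1}{\sqrt{k}}\sum_{a=1}^{k-1}\frac{1}{2\|a/k\|}\ll \frac{1}{\sqrt{k}}\cdot k\log k=\sqrt{k}\log k,$$
using the standard harmonic-type bound $\sum_{1\le a\le k-1}\|a/k\|^{-1}\ll k\log k$. Combining with the reduction step produces the claimed $k^{1/2+o(1)}$ bound uniformly in $M$ and $N$.

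The main technical point is the reduction to the primitive case, since $\tau(\chi)$ has absolute value $\sqrt{k}$ only when $\chi$ is primitive; the rest is essentially an exercise with Gauss sums and the bound for the linear exponential sum. No serious obstacle is anticipated.
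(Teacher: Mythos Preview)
Your argument is the standard Gauss-sum proof and is correct; it indeed gives $O(\sqrt{k}\log k)$ for primitive $\chi$ and then $k^{1/2+o(1)}$ in general after the M\"obius reduction. One tiny expository slip: the contribution from $a=k$ vanishes simply because $\bar\chi(k)=\bar\chi(0)=0$, not because $\sum_a\bar\chi(a)=0$; the latter is true but not the relevant reason. This does not affect the argument.

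As for comparison with the paper: there is nothing to compare. The paper does not prove this lemma at all --- it quotes it as the classical P\'olya--Vinogradov inequality with a reference to \cite[Theorem~12.5]{IK}. Your write-up is essentially the proof one finds in that reference (or in Davenport), so you have supplied exactly what the citation points to.
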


We also recall a result from~\cite[Corollary 5.29]{IK} which gives a bound for character sums over primes.

\begin{lemma} \label{PNT bound}
For $k >2$ and fixed $A>0$, we have
$$
 \max_{\substack{\chi \in \mathcal{X}_k^*}} \bigg |\sum_{p \le y} \chi(p) \bigg |\ll k^{\frac{1}{2}} y (\log y)^{-A}.
$$
\end{lemma}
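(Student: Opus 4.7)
The plan is to reduce to a Siegel--Walfisz type estimate on the twisted Chebyshev function $\psi(y, \chi) = \sum_{n \le y} \Lambda(n) \chi(n)$ and then recover the bound on the prime sum by partial summation. Prime powers $p^r$ with $r \ge 2$ contribute $O(y^{1/2} \log y)$ to $\theta(y,\chi) = \sum_{p \le y} \chi(p) \log p$, so $\theta(y, \chi) = \psi(y, \chi) + O(y^{1/2} \log y)$; Abel summation then converts a bound $\psi(y, \chi) \ll y (\log y)^{-A-1}$ into $\sum_{p \le y} \chi(p) \ll y (\log y)^{-A}$. It therefore suffices to establish such a bound for $\psi(y, \chi)$ with $\chi$ nonprincipal modulo $k$.

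For this I would use Mellin--Perron: with $c = 1 + 1/\log y$ and a parameter $T \ge 2$,
$$
\psi(y, \chi) = -\frac{1}{2\pi i} \int_{c - iT}^{c + iT} \frac{L'(s, \chi)}{L(s, \chi)} \frac{y^s}{s}\, ds + O\!\left( \frac{y (\log y)^2}{T} \right),
$$
and then shift the contour to the vertical line $\mathrm{Re}(s) = 1 - c_1/\log(kT)$, where $c_1$ is the constant from the classical Landau--Page zero-free region for $L(s, \chi)$. Using the standard bound $|L'/L(s,\chi)| \ll (\log(kT))^2$ inside this region together with the zero-counting estimate $\sum_{|\gamma|\le T} 1 \ll T \log(kT)$ for nontrivial zeros, the residues from non-exceptional zeros together with the vertical-line integral contribute $\ll y^{1 - c_1/\log(kT)} (\log kT)^3$. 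Taking $T = \exp(\sqrt{\log y})$ yields the bound $y \exp(-c_2 \sqrt{\log y})$, which is $\ll y (\log y)^{-A-1}$ for every fixed $A$, provided $k \le \exp(c_3 \sqrt{\log y})$. Outside this range the factor $k^{1/2}$ in the statement already exceeds the trivial bound $\pi(y) \ll y/\log y$, so the claim holds automatically.

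The hardest step is controlling a possible real exceptional (Siegel) zero $\beta_0$ of $L(s, \chi)$ for a quadratic character, whose contribution to $\psi(y, \chi)$ is $\ll y^{\beta_0}/\beta_0$ and must be absorbed into the error term. I would invoke Siegel's theorem in the form $\beta_0 \le 1 - C(\eta) k^{-\eta}$ for every fixed $\eta > 0$, with an \emph{ineffective} constant $C(\eta)$; choosing $\eta$ sufficiently small in terms of $A$ forces $y^{\beta_0} \ll y(\log y)^{-A-1}$ uniformly over the relevant range of $k$. The ineffectivity of $C(\eta)$ is inherited by the implied constant in the final estimate and is the well-known price paid by this method.
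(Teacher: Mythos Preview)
The paper does not actually prove this lemma: it simply cites Corollary~5.29 of Iwaniec--Kowalski. Your sketch is precisely the standard Siegel--Walfisz argument underlying that reference, so in substance you are supplying the proof the paper quotes.

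One slip to fix. Your dichotomy at $k=\exp(c_3\sqrt{\log y})$ handles the non-exceptional zeros, but it is too wide for the Siegel-zero step. If $k$ is allowed to be as large as $\exp(c_3\sqrt{\log y})$, then for any fixed $\eta>0$ one has $k^{-\eta}\log y \to 0$, so Siegel's bound $1-\beta_0\ge C(\eta)k^{-\eta}$ no longer forces $y^{\beta_0}\ll y(\log y)^{-A-1}$; ``$\eta$ small in terms of $A$'' does not rescue this. The repair is routine: split instead at $k=(\log y)^{B}$ with, say, $B=2A$. For $k>(\log y)^{B}$ the trivial bound $\bigl|\sum_{p\le y}\chi(p)\bigr|\le \pi(y)$ already lies below $k^{1/2}y(\log y)^{-A}$. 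For $k\le(\log y)^{B}$ one has $\log(kT)\asymp\sqrt{\log y}$, so the non-exceptional contribution is $\ll y\exp(-c\sqrt{\log y})$, while choosing $\eta<1/B$ gives $(1-\beta_0)\log y\ge C(\eta)(\log y)^{1-B\eta}\to\infty$, and the exceptional term is likewise negligible. With this adjusted split the argument goes through exactly as you outlined.
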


We obtain a stronger bound under the GRH. This follows by taking $T=x$ in (13) on page 120 of~\cite{D} and applying the GRH.

\begin{lemma} \label{GRH bound}
Assume the GRH then we have
$$
\max_{\substack{\chi \in \mathcal{X}_k^* }} \bigg |\sum_{p \le y} \chi(p) \bigg | \ll y^{ \frac{1}{2} } \log k y.
$$
\end{lemma}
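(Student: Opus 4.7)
The plan is to deduce the bound from the explicit formula for $\psi(y,\chi)$ together with partial summation. I would first reduce to primitive characters: if $\chi \in \mathcal{X}_k^*$ is induced by a primitive character $\chi^*$ modulo $k^* \mid k$, then $\psi(y,\chi)$ and $\psi(y,\chi^*)$ differ only by the contribution of prime powers $p^{j}\le y$ with $p\mid k$, which is bounded by $\sum_{p\mid k,\,p\le y}\log y \ll (\log y)(\log k)$ and is absorbed in the target estimate.

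Next, I would invoke equation~(13) on p.~120 of~\cite{D}, which, for primitive $\chi^{*}$ modulo $k^{*}$ and any $T \ge 2$, asserts an identity of the shape
$$
\psi(y,\chi^{*}) \;=\; -\sum_{|\gamma|\le T}\frac{y^{\rho}}{\rho} \;+\; O\!\left(\frac{y(\log ky)^{2}}{T} + y^{1/4}\log y\right),
$$
the sum running over non-trivial zeros $\rho = \beta + i\gamma$ of $L(s,\chi^{*})$. Choosing $T=y^{2}$ as suggested in the text preceding the lemma makes the main error term $O(y^{-1}(\log ky)^{2})$, which is negligible. Under GRH, every such zero satisfies $\beta = 1/2$, so $|y^{\rho}/\rho| \le y^{1/2}/\max(|\gamma|,1/2)$, and the Riemann--von Mangoldt count of zeros in horizontal strips gives $\sum_{|\gamma|\le T}|\rho|^{-1} \ll (\log kT)^{2}$. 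Combining these estimates yields $\psi(y,\chi) \ll y^{1/2}(\log ky)^{2}$.

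To pass from $\psi$ to the sum over primes, I would first note that the prime-power tail $\sum_{p^{j}\le y,\,j\ge 2}\chi(p^{j})\log p$ is trivially $\ll y^{1/2}\log y$, so $\theta(y,\chi):=\sum_{p\le y}\chi(p)\log p$ inherits the same bound as $\psi(y,\chi)$. Partial summation in the form
$$
\sum_{p\le y}\chi(p) \;=\; \frac{\theta(y,\chi)}{\log y} \;+\; \int_{2}^{y}\frac{\theta(t,\chi)}{t(\log t)^{2}}\,dt
$$
then completes the argument; the integral is handled by splitting at $t=k$ (when $k\le y$), using $(\log kt)^{2} \ll (\log k)^{2}$ on $[2,k]$ and $(\log kt)^{2}\ll (\log t)^{2}$ on $[k,y]$.

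The only step that genuinely invokes GRH is the bound on the zero sum; the rest is routine bookkeeping with the explicit formula and partial summation. The main technical care needed, and the place where a sloppy estimate would fail, is verifying that all the error terms (from the imprimitive-to-primitive reduction, from the truncation in the explicit formula, and from partial summation) are dominated by $y^{1/2}\log(ky)$ in the regime $k = y^{O(1)}$ relevant for the applications.
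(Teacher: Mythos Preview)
Your approach is exactly the one the paper indicates: invoke equation~(13) on p.~120 of Davenport with $T=y^{2}$ and apply GRH, then pass from $\psi$ to the unweighted prime sum by partial summation. The paper gives no further details, so your write-up in fact supplies the argument rather than merely reproducing it; your caveat that the final logarithm count only comes out to $\log(ky)$ in the regime $k=y^{O(1)}$ is accurate and harmless, since every application in the paper lies in that range.
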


We also recall the mean value estimate for character sums which follows immediately by orthogonality.

\begin{lemma} \label{mean value bound}
For $N \ge 1$ and any sequence of complex numbers $a_n$, we have
$$
\sum_{\chi \in \mathcal{X}_k} \bigg | \sum_{n \le N} a_n \chi(n) \bigg |^2 \le \varphi(k)(N/k+1) \sum_{n \le N} |a_n|^2.
$$
\end{lemma}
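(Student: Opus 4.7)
The plan is to expand the modulus-squared, swap orders of summation, and invoke the orthogonality relations for Dirichlet characters modulo $k$. Writing
$$
\bigg| \sum_{n \le N} a_n \chi(n) \bigg|^2 = \sum_{m,n \le N} a_m \overline{a_n} \, \chi(m) \overline{\chi(n)},
$$
I would sum both sides over $\chi \in \mathcal{X}_k$ and use the identity
$$
\sum_{\chi \in \mathcal{X}_k} \chi(m) \overline{\chi(n)} = \begin{cases} \varphi(k) & \text{if } m \equiv n \pmod{k} \text{ and } (mn,k)=1, \\ 0 & \text{otherwise}. \end{cases}
$$
This collapses the double sum over $m,n$ into a single sum over the residue classes $r$ modulo $k$ coprime to $k$.

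Next, I would rewrite the result in the clean form
$$
\sum_{\chi \in \mathcal{X}_k} \bigg| \sum_{n \le N} a_n \chi(n) \bigg|^2 = \varphi(k) \sum_{\substack{r \pmod k \\ (r,k)=1}} \bigg| \sum_{\substack{n \le N \\ n \equiv r \pmod k}} a_n \bigg|^2.
$$
To each inner sum I would apply the Cauchy--Schwarz inequality, giving
$$
\bigg| \sum_{\substack{n \le N \\ n \equiv r \pmod k}} a_n \bigg|^2 \le \# \{n \le N : n \equiv r \pmod k\} \cdot \sum_{\substack{n \le N \\ n \equiv r \pmod k}} |a_n|^2.
$$

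Finally I would use the trivial count $\# \{n \le N : n \equiv r \pmod k\} \le N/k + 1$ (uniformly in $r$) and sum over the residue classes $r$ coprime to $k$. The double sum collapses since every $n \le N$ with $(n,k)=1$ falls into exactly one residue class, yielding the asserted bound $\varphi(k)(N/k+1)\sum_{n \le N}|a_n|^2$ (and the restriction $(n,k)=1$ can be dropped, only strengthening the inequality). There is no real obstacle here; the whole argument is a textbook application of orthogonality plus Cauchy--Schwarz, which is why the author simply remarks that it follows immediately.
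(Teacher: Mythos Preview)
Your proof is correct and is exactly the standard orthogonality-plus-Cauchy--Schwarz argument the paper has in mind; the paper itself gives no explicit proof, only the remark that the lemma ``follows immediately by orthogonality.'' There is nothing to add.
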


\subsection{Type I and II estimates}

We recall that $k > 2$  is an integer and we define the sequences $\cA = ( c_r)$ by
$$
c_{r} = \sum_{\substack{p \le y, (p,k)=1 \\ r \equiv a \bar{p}_k \Mod{k}  }} 1,
$$
and $\cB = (\mathbbm{1}_{(r,k)=1})$ both supported on the interval $[1,x]$.

\begin{lemma}[Type I estimate] \label{type I}
Suppose we have the bound
$$
\sum_{\substack{p \le y }} \chi(p) \ll \Delta(k,y)
$$
for all $\chi \in \mathcal{X}_k^* $.
For any complex sequence $a_m$ such that $a_m \ll M^{o(1)}$, we have
\begin{equation} \label{eqn:type I}
\sum_{\substack{c_{mn} \in \cA \\ m \le M }} a_m c_{mn} = \frac{ \# \mathcal{P}_k(y)}{\varphi(k)} \sum_{\substack{mn \in \cB \\ m \le M  }} a_m   + O \left (\Delta(k,y) M^{1+o(1)} k^{\frac{1}{2} +o(1)}  \right  ).
\end{equation}
\end{lemma}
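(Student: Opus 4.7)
The plan is to detect the congruence $mn \equiv a \bar p_k \pmod{k}$ by orthogonality of the multiplicative characters modulo $k$. Using $(a, k) = (p, k) = 1$, this recasts the left-hand side of~\eqref{eqn:type I} as
\begin{equation*}
\frac{1}{\varphi(k)} \sum_{\chi \in \mathcal{X}_k} \bar\chi(a) \bigg(\sum_{\substack{p \le y \\ (p, k) = 1}} \chi(p)\bigg) \sum_{m \le M} a_m \chi(m) \sum_{n \le x/m} \chi(n),
\end{equation*}
after which the analysis decouples character by character.

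First I would isolate the principal character $\chi_0$. Its prime sum equals $\# \mathcal{P}_k(y)$, and a routine accounting for the coprimality conditions $(m, k) = 1$ and $(n, k) = 1$ implicit in $\chi_0$ shows that the remaining expression matches the stated main term $\varphi(k)^{-1} \# \mathcal{P}_k(y) \sum_{mn \in \cB,\, m \le M} a_m$ up to an admissible error. For each non-principal $\chi \in \mathcal{X}_k^*$, I would apply the hypothesis $|\sum_{p \le y} \chi(p)| \ll \Delta(k, y)$ to bound the prime sum and the P\'olya-Vinogradov inequality (Lemma~\ref{PV bound}) to get $|\sum_{n \le x/m} \chi(n)| \ll k^{1/2 + o(1)}$ uniformly in $m \le M$.

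Combining these pointwise bounds, summing over the $\varphi(k) - 1$ non-principal characters (which cancels the $1/\varphi(k)$ coming from orthogonality), and invoking the divisor bound $\sum_{m \le M} |a_m| \le \sum_{m \le M} \tau(m) \ll M^{1 + o(1)}$ would yield the claimed error $O(\Delta(k, y) M^{1 + o(1)} k^{1/2 + o(1)})$. The main obstacle I foresee is the bookkeeping in the principal character step, namely verifying that replacing the coprimality-restricted sum by the unrestricted $\sum_{mn \in \cB,\, m \le M} a_m$ costs no more than the stated error; once this is in place, the rest of the argument is a direct application of the multiplicative character framework, with the three sums over $m$, $n$, and $p$ factoring without any subtle interaction.
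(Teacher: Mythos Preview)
Your proposal is correct and follows essentially the same argument as the paper: orthogonality of characters modulo $k$, separation of the principal character to produce the main term, then P\'olya--Vinogradov on the inner $n$-sum together with the hypothesis on $\sum_{p\le y}\chi(p)$ for each $\chi\in\mathcal{X}_k^*$, and finally the divisor bound $\sum_{m\le M}\tau(m)\ll M^{1+o(1)}$. You are in fact more careful than the paper about the coprimality bookkeeping in the $\chi_0$ step---the paper simply records the unrestricted sum $\sum_{mn\in\cB,\,m\le M}a_m$ as the main term without further comment.
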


\begin{proof}
We recall the orthogonality relation
$$
\frac{1}{\varphi(k)} \sum_{\chi \in \mathcal{X}_k} \bar{\chi}(a)\chi(r) =
\begin{cases}
1 &\mbox{if $r \equiv a \Mod{k}$,} \\
0 &\mbox{otherwise,}
\end{cases}
$$
for $(a,k)=1$. Applying the above identity, we get
\begin{align*}
\sum_{\substack{c_{mn} \in \cA \\  m \le M }} a_m c_{mn} & =  \sum_{\substack{mn \in \cB \\  m \le M }} a_m  \sum_{\substack{ p \le y, (p,k)=1 \\ mn \equiv a \bar{p}_k \Mod{k} }} 1 \\
& = \frac{1}{\varphi(k)}  \sum_{\substack{mn \in \cB \\  m \le M }} a_m \sum_{\substack{ p \le y \\ (p,k)=1 } }  \sum_{\chi \in \mathcal{X}_k}  \chi(mn) \bar{\chi}(a \bar{p}_k).
\end{align*}

Separating the main term corresponding to the principal character $\chi_0$, the above becomes
$$
\frac{\# \mathcal{P}_k(y)}{\varphi(k)}  \sum_{\substack{  mn \in \cB \\  m \le M }} a_m  +  \frac{1}{\varphi(k)}\sum_{\substack{\chi \in \mathcal{X}_k^*  }} \bar{\chi} (a) \sum_{\substack{ mn \le x \\ m \le M}} a_m \chi(mn) \sum_{\substack{ p \le y \\(p,k)=1}} \chi(p).
$$

Denote the second sum on the right by $R$. By the P\'olya-Vinogradov inequality  (Lemma \ref{PV bound}) and our assumption, we obtain
\begin{align*}
R & \ll M^{1+o(1)} \sum_{\substack{ \chi \in \mathcal{X}_k^* }}  \max_{m \le M} \bigg  |\sum_{\substack{  n \le x/m}}  \chi(n) \bigg |  \cdot  \bigg | \sum_{ \substack{ p \le y  }}  \chi(p) \bigg  |   \\
& \ll M^{1+o(1)}  \varphi(k) k^{\frac{1}{2} +o(1)} \Delta(k,y).
\end{align*}
\end{proof}

Using similar argument to Lemma~\ref{type I}, we obtain our Type II estimate.

\begin{lemma}[Type II estimate] \label{type II}
Suppose we have the bound
$$
\sum_{\substack{p \le y }} \chi(p) \ll \Delta(k,y)
$$
for all $\chi \in \mathcal{X}_k^* $.
For any complex sequences $a_m, b_n$ such that $a_m, b_n \ll x^{o(1)}$, we have
\begin{align*} 
\sum_{\substack{c_{mn} \in \cA \\ m \sim M }} a_m b_n c_{mn}  = & \frac{\# \mathcal{P}_k(y)  }{\varphi(k)}  \sum_{\substack{mn \in \cB \\ m \sim M  }}  a_m  b_n   \\
&   +  O \Big ( \Delta(k,y) \Big ( \frac{x}{k} + \frac{M^{\frac{1}{2}} x^{\frac{1}{2}}}{k^{\frac{1}{2}}} + \frac{x}{M^{\frac{1}{2}} k^{\frac{1}{2}}} + x^{\frac{1}{2}} \Big ) (xM)^{o(1)} \Big).
\end{align*}
\end{lemma}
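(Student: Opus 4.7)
The plan is to follow the template of Lemma~\ref{type I}, but to replace the use of the P\'olya--Vinogradov inequality on the ``long'' variable with Cauchy--Schwarz over characters combined with the mean value bound (Lemma~\ref{mean value bound}). As in the Type~I proof, I first expand the congruence condition using orthogonality of multiplicative characters modulo $k$ to write
\[
\sum_{\substack{c_{mn} \in \cA \\ m \sim M}} a_m b_n c_{mn} = \frac{1}{\varphi(k)} \sum_{\chi \in \mathcal{X}_k} \bar\chi(a) \sum_{\substack{mn \le x \\ m \sim M}} a_m b_n \chi(mn) \sum_{\substack{p \le y \\ (p,k)=1}} \chi(p).
\]
The principal character $\chi_0$ supplies the main term $(\#\mathcal{P}_k(y)/\varphi(k)) \sum_{mn \in \cB,\, m \sim M} a_m b_n$, and it remains to bound the contribution $R$ from $\chi \in \mathcal{X}_k^*$.

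For $R$, I use the hypothesis $|\sum_{p \le y}\chi(p)| \ll \Delta(k,y)$ uniformly in $\chi \in \mathcal{X}_k^*$ and pull this factor outside. To deal with the coupled range $mn \le x$, I perform a dyadic decomposition in $n$, covering the region by $O(\log x)$ boxes $m \sim M$, $n \sim N$ with $N \le x/M$; this costs only $x^{o(1)}$. On each dyadic piece, Cauchy--Schwarz over the characters gives
\[
\sum_{\chi \in \mathcal{X}_k^*} \bigg|\sum_{m \sim M} a_m \chi(m)\bigg|\bigg|\sum_{n \sim N} b_n \chi(n)\bigg| \le \bigg(\sum_{\chi}\Big|\sum_{m \sim M} a_m \chi(m)\Big|^2\bigg)^{1/2}\!\!\bigg(\sum_{\chi}\Big|\sum_{n \sim N} b_n \chi(n)\Big|^2\bigg)^{1/2}.
\]
Applying Lemma~\ref{mean value bound} together with the divisor bounds $\sum_{m \sim M}|a_m|^2 \ll M^{1+o(1)}$ and $\sum_{n \sim N}|b_n|^2 \ll N^{1+o(1)}$ yields a product of the form
\[
\varphi(k)\bigl((M/k+1)(N/k+1)MN\bigr)^{1/2} (MN)^{o(1)}.
\]

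Expanding this square root gives the four terms $MN/k$, $M(N/k)^{1/2}$, $N(M/k)^{1/2}$, $(MN)^{1/2}$, and the constraint $MN \le x$ together with $N \le x/M$ converts these precisely into the four terms $x/k$, $M^{1/2}x^{1/2}/k^{1/2}$, $x/(M^{1/2}k^{1/2})$, $x^{1/2}$ appearing in the claimed error bound. Dividing by $\varphi(k)$ (from the character-orthogonality prefactor) cancels the $\varphi(k)$ above, and the logarithmic loss from the dyadic decomposition is absorbed into the $(xM)^{o(1)}$ factor. The only subtlety I anticipate is the bookkeeping around the non-product range $mn \le x$; the dyadic decomposition is the cleanest way to separate the variables so that Cauchy--Schwarz applies to an honest bilinear form, and it is the step where one must verify that all four resulting terms line up with the target bound.
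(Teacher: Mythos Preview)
Your strategy coincides with the paper's: expand via character orthogonality, isolate the principal-character main term, pull out $\Delta(k,y)$ from the remainder using the hypothesis, and then apply Cauchy--Schwarz over $\chi$ together with the mean value bound (Lemma~\ref{mean value bound}) to the bilinear form in $m,n$. The endgame that converts $\bigl((M/k+1)(N/k+1)MN\bigr)^{1/2}$ with $N\le x/M$ into the four displayed error terms is exactly the paper's computation.

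The one genuine difference is the device used to decouple the range $mn\le x$. The paper does \emph{not} dyadically split in $n$; instead, for $n\le \lfloor x/M\rfloor$, it inserts the additive-character identity
\[
\sum_{a \le x/m}\frac{1}{\lfloor x/M\rfloor}\sum_{z=1}^{\lfloor x/M\rfloor} e\!\left(\frac{z(a-n)}{\lfloor x/M\rfloor}\right)=
\begin{cases}1 & \text{if } n \le x/m,\\ 0 & \text{otherwise},\end{cases}
\]
which at the cost of a single $\log x$ factor replaces the $m$-dependent range $n\le x/m$ by the fixed range $n\le x/M$, with unimodularly twisted coefficients $b_n^*$ satisfying $|b_n^*|=|b_n|$; Cauchy--Schwarz then applies directly. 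Your dyadic decomposition handles the interior boxes (those with $4MN\le x$) cleanly, but in the $O(1)$ boundary boxes the constraint $mn\le x$ still couples $m$ and $n$, and the sum there is not yet an honest bilinear form; Cauchy--Schwarz applied naively on such a box loses an extra power. To make that step rigorous you would need a further separation device --- and the paper's Fourier trick, or equivalently a truncated Perron integral, is precisely such a device. So your plan and the resulting error term are correct, but the ``bookkeeping'' you flag as the only subtlety is exactly the place where the paper uses a sharper tool than plain dyadic splitting.
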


\begin{proof}
We proceed as in the proof of Lemma~\ref{type I} and it is enough to bound
$$
R=\sum_{\substack{\chi \in \mathcal{X}_k^* }}  \sum_{\substack{  mn \le x \\ m \sim M}} a_m b_n   \chi(mn) \sum_{\substack{p  \le y \\ (p,k)=1 }} \bar{\chi}(a\bar{p}_k).
$$
We apply
$$
\sum_{t \le x/m} \frac{1}{\lfloor \frac{x}{M} \rfloor} \sum_{z=1}^{\lfloor \frac{x}{M} \rfloor} e \bigg ( \frac{z(t-n)}{\lfloor x/M \rfloor} \bigg ) =
\begin{cases}
1 &\mbox{if $n \le x/m$,} \\
0 &\mbox{otherwise,}
\end{cases}
$$
to $R$ in order to separate the dependence on $m$ in the summation over $n$. Indeed $R$ becomes \overfullrule=0pt
\begin{align*}
&   \sum_{\substack{\chi \in \mathcal{X}_k^* }}  \sum_{\substack{  n \le x/M \\ m \sim M}} \left (\sum_{t \le x/m} \frac{1}{\lfloor \frac{x}{M} \rfloor} \sum_{z=1}^{\lfloor \frac{x}{M} \rfloor} e \bigg ( \frac{z(t-n)}{\lfloor \frac{x}{M} \rfloor} \bigg ) \right ) a_m b_n   \chi(mn) \sum_{\substack{p  \le y \\ (p,k)=1 }} \bar{\chi}(a\bar{p}_k)\\
& =  \frac{1}{\lfloor \frac{x}{M} \rfloor} \sum_{z=1}^{\lfloor \frac{x}{M} \rfloor}  \sum_{\substack{\chi \in \mathcal{X}_k^* }}  \sum_{\substack{  n \le x/M \\ m \sim M}} \left (\sum_{t \le x/m} e \bigg ( \frac{zt}{\lfloor \frac{x}{M}  \rfloor} \bigg ) \right ) a_m b_n e \bigg ( \frac{-zn}{\lfloor \frac{x}{M}  \rfloor} \bigg )\chi(mn) \sum_{\substack{p  \le y \\ (p,k)=1 }} \bar{\chi}(a\bar{p}_k).  \\
\end{align*}

Recall from~\cite[Bound~(8.6)]{IK}, we have
$$
\sum_{t \le x/m} e \bigg ( \frac{zt}{\lfloor x/M \rfloor} \bigg ) \ll \frac{\lfloor x/M \rfloor}{1 + \min \left \{z, \lfloor x/M \rfloor -z \right \} }
$$
for $z= 1 , \ldots,  \lfloor x/M \rfloor -1$. We apply this to obtain
$$
R \ll \log (x) \sum_{\substack{\chi \in \mathcal{X}_k^* }} \Bigg |   \sum_{\substack{n \le x/M \\ m \sim M}} a_m^* b_n^*   \chi(mn) \Bigg | \cdot \Bigg | \sum_{\substack{ p \le y \\ (p,k)=1 }} \bar{\chi}(a\bar{p}_k) \Bigg |,
$$
where $|a_m^*|=|a_m|, |b_n^*|=|b_n|$. By Cauchy's inequality and Lemma~\ref{mean value bound}, we bound
\begin{align*}
R & \ll (\log x)\max_{ \substack{\chi \in \mathcal{X}_k^* }} \Bigg |\sum_{ \substack{p \le y }} \chi(p) \Bigg | \cdot  \sum_{\substack{\chi \in \mathcal{X}_k}} \Bigg |\sum_{m \sim M} a_m^* \chi(m) \Bigg| \cdot \Bigg|\sum_{n \le x/M} b_n^* \chi(n) \Bigg| \\
& \ll (\log x) \Delta(k,y) \Bigg( \sum_{\substack{\chi \in \mathcal{X}_k}} \Bigg|\sum_{m \sim M} a_m^* \chi(m)\Bigg|^2  \cdot \sum_{\chi \in \mathcal{X}_k} \Bigg |\sum_{n \le x/M} b_n^* \chi(n)  \Bigg |^2 \Bigg)^{ \frac{1}{2} } \\
& \ll  \Delta(k,y) \bigg (\varphi(k) \bigg  ( \frac{M}{k} +1 \bigg )M \varphi(k) \bigg (\frac{x}{Mk} +1 \bigg ) \frac{x}{M} \bigg )^{\frac{1}{2}} (xM)^{o(1)} \\
& \ll  \Delta(k,y) \varphi(k)    \bigg ( \frac{x}{k} + \frac{M^{\frac{1}{2}} x^{\frac{1}{2}}}{k^{\frac{1}{2}}} + \frac{x}{M^{\frac{1}{2}} k^{\frac{1}{2}}} + x^{\frac{1}{2}} \bigg  ) (xM)^{o(1)},
\end{align*}
where we have used the bound $a_m, b_n \ll x^{o(1)}$.
\end{proof}

\subsection{Harman sieve}

In this section, we set $\cA = (\xi_r)$ and $\cB = (\eta_r)$ to be any general sequence of complex numbers supported on $[1 , x]$. For any positive integer $s$, we denote the sequence
$$
\cA_s = (\xi_{rs}).
$$

Moreover, for any positive real number $z$, we define the weighted sifting function to be
$$
\cS(\cA,z) = \sum_{\substack{ r \le x \\ (r,P(z))=1}} \xi_r,
$$
where $P(w) = \prod_{p < w } p$ is the product of all primes less than $w$. We direct the interested reader to Harman's monograph on sieves~\cite{H2} for more information. 

We recall a lemma which is essentially due to Buchstab~\cite[Eq. (13.58)]{IK} but we state it here with weighted sifting function, see also~\cite{H1,H,H2}.

\begin{lemma}[Buchstab identity] \label{buchstab identity}
For any $0 < z_2 \le z_1$, we have
$$
\cS(\cA,z_1) = \cS(\cA, z_2) - \sum_{ z_2 \le p <z_1} \cS(\cA_p,p).
$$
\end{lemma}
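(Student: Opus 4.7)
The plan is to prove the identity by partitioning the indices counted in $\cS(\cA, z_2)$ but not in $\cS(\cA, z_1)$ according to their smallest prime factor — the standard combinatorial argument behind the Buchstab identity, now carried out for the weighted sifting function rather than for a cardinality.

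First, since $z_2 \le z_1$ we have $P(z_2) \mid P(z_1)$, so every $r$ with $(r, P(z_1)) = 1$ automatically satisfies $(r, P(z_2)) = 1$. Thus
$$
\cS(\cA, z_2) - \cS(\cA, z_1) = \sum_{\substack{r \le x \\ (r, P(z_2)) = 1 \\ (r, P(z_1)) \ne 1}} \xi_r.
$$
For any $r$ appearing in the right-hand sum, the two coprimality conditions translate exactly to $z_2 \le P^-(r) < z_1$. I would therefore partition the sum according to the value $p := P^-(r)$, which runs over primes in $[z_2, z_1)$.

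Next, for a fixed prime $p \in [z_2, z_1)$, I would write $r = ps$ with $s \le x/p$ and observe that the condition ``$p$ is the smallest prime factor of $r$'' is equivalent to $(s, P(p)) = 1$: the forward implication follows from the minimality of $p$, while the converse is immediate since the prime factors of $r = ps$ are $p$ together with those of $s$, all of which are then $\ge p$. Using the definition $\cA_p = (\xi_{ps})$, this yields
$$
\sum_{\substack{r \le x \\ P^-(r) = p}} \xi_r \;=\; \sum_{\substack{s \le x/p \\ (s, P(p)) = 1}} \xi_{ps} \;=\; \cS(\cA_p, p).
$$
Summing over primes $p \in [z_2, z_1)$ and rearranging gives the claimed identity.

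The argument is a pure bookkeeping exercise, and no step presents a genuine analytic or combinatorial obstacle. The only small point requiring care is the equivalence between ``$P^-(ps) = p$'' and ``$(s, P(p)) = 1$'', which I would state explicitly so that the passage between the sum indexed by $r$ and the sum indexed by $s$ is unambiguous; note also that the edge case $r = 1$ causes no trouble since it is counted in every $\cS(\cA, z)$ and so cancels from the difference on the left-hand side.
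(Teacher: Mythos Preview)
Your proof is correct and is the standard combinatorial derivation of the Buchstab identity. The paper itself does not supply a proof of this lemma but simply quotes it from \cite[Eq.~(13.58)]{IK}, so there is nothing to compare against; your argument is exactly the one underlying that reference.
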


It is easy to see that the following variant of Harman sieve follows closely from~\cite[Lemma 2]{H}. 

\begin{prop}[Harman sieve] \label{Harman sieve}
Suppose that for any $a_m, b_n \ll x^{o(1)}$, we have for some $\lambda >0$, $\alpha >0$, $\beta \le \frac{1}{2}$, $M \ge 1$, that
\begin{equation} \label{eq:type I (harman)}
\sum_{\substack{\xi_{mn} \in \cA \\ m \le M}} a_m \xi_{mn} = \lambda \sum_{\substack{\eta_{mn} \in \cB \\ m \le M}} a_m \eta_{mn}  +O(Y)
\end{equation}
and
\begin{equation} \label{eq:type II (harman)}
\sum_{\substack{\xi_{mn} \in \cA \\ x^{\alpha} \le m \le x^{\alpha + \beta}}} a_m b_n \xi_{mn} = \lambda\sum_{\substack{ \eta_{mn}  \in \cB \\ x^{\alpha} \le m \le x^{\alpha + \beta}}} a_m b_n \eta_{mn}   + O(Y).
\end{equation}
Then, if $|c_r| \le 1$, $x^{\alpha} < M$, $R < \min \{x^{1-\alpha}, M \}$ and $M \ge x^{1-\alpha}$ if $R > x^{\alpha + \beta}$, we have
$$
\sum_{r \sim R} c_r \cS(\cA_r, x^{\beta})= \lambda \sum_{r\sim R} c_r \cS(\cB_r, x^{\beta}) +O(Y\log^3(x)).
$$
\end{prop}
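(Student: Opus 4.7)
The plan is to adapt the strategy of~\cite[Lemma~2]{H}, modified to carry the auxiliary weights $c_r$ and the outer summation over $r\sim R$ through the argument. The core idea is to apply the Buchstab identity of Lemma~\ref{buchstab identity} iteratively to $\cS(\cA_r, x^\beta)$, and at each stage to classify every sub-sum generated as either of Type~I shape (handled by~\eqref{eq:type I (harman)}) or Type~II shape (handled by~\eqref{eq:type II (harman)}).

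Concretely, I would first peel off primes by writing
$$
\cS(\cA_r, x^\beta) = \cS(\cA_r, x^{\alpha+\beta}) - \sum_{x^\beta \le p < x^{\alpha+\beta}} \cS(\cA_{rp}, p),
$$
and then split the remaining sum at $p = x^\alpha$. For $p \in [x^\alpha, x^{\alpha+\beta})$ the contribution has the bilinear shape required by~\eqref{eq:type II (harman)}, since freezing $r$ and viewing $\cS(\cA_{rp}, p)$ as an $n$-sum with divisor-bounded coefficients places us directly in its hypothesis; this branch is dispatched immediately. For $p \in [x^\beta, x^\alpha)$, I would iterate Buchstab on $\cS(\cA_{rp}, p)$, and continue until every branch either produces a prime factor in $[x^\alpha, x^{\alpha+\beta})$ (Type~II) or sees the accumulated ``smooth'' variable of size at most $M$ with the remaining factor unrestricted (Type~I, after grouping weights into a single sequence $a_m$ with $|a_m| \ll \tau(m)(\log x)^{O(1)}$). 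The parameter constraints $x^\alpha < M$, $R < \min\{x^{1-\alpha}, M\}$, and the conditional $M \ge x^{1-\alpha}$ when $R > x^{\alpha+\beta}$, are precisely what is needed to prevent any branch from landing outside the union of the two regimes. Since the hypothesis matches $\cA$ to $\lambda\cB$ in both regimes, the leading factor is preserved piece by piece when the same decomposition is applied to $\sum_{r\sim R} c_r \cS(\cB_r, x^\beta)$ as well.

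The main obstacle will be the combinatorial accounting of the Buchstab tree. Each peeling step contributes a $\log x$ from the summation over primes, an additional $\log x$ arises from dyadic decomposition of the ``smooth'' variable when assembling it into the sequence $a_m$, and the outer sum over $r \sim R$ contributes a further $\log x$. Combined with the $O(Y)$ error per leaf of the tree supplied by the hypothesis, these produce the advertised $O(Y\log^3 x)$ error. Verifying branch by branch that every sub-sum terminates in a tractable regime — and in particular that the Buchstab tree does not blow up in depth before the constraints $x^\alpha<M$ and $R<\min\{x^{1-\alpha},M\}$ force resolution — is tedious but routine, following the case analysis of~\cite{H}.
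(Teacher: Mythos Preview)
Your proposal takes essentially the same approach as the paper, which gives no proof beyond the remark that the result ``follows closely to the proof of~\cite[Lemma 2]{H}''; your sketch of the Buchstab iteration and Type~I/II classification is precisely that argument, carried through with the extra $c_r$-weights. One cosmetic slip: the displayed Buchstab step has the wrong sign (Lemma~\ref{buchstab identity} gives $\cS(\cA_r,x^\beta)=\cS(\cA_r,x^{\alpha+\beta})+\sum_{x^\beta\le p<x^{\alpha+\beta}}\cS(\cA_{rp},p)$), but this does not affect the strategy.
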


Lastly, we recall a result on rough numbers~\cite[Theorem 7.11]{MV}.

\begin{lemma}[Buchstab function] \label{count:buch}
For $ u \ge 1$, we have
$$
\# \{n \le x : P^-(n) \ge x^{1/u} \}     \sim     \frac{x}{\log x} u \omega(u),
$$
where $\omega$ is the Buchstab function defined by the delay differential equation
$$ 
\begin{cases}
\omega(u) = 1/u &\mbox{ for $1 \le u \le 2$,} \\
(u \omega(u))' = \omega(u-1) & \mbox{ for $u >2$.}
\end{cases}
$$
\end{lemma}

\section{Proof of Theorem~\ref{uncond result}}
We recall that $k > 2$ is an integer and we define the sequences $\cA = ( c_r)$ by
$$
c_{r} = \sum_{\substack{p \le y, (p,k)=1 \\ r \equiv a \bar{p}_k \Mod{k}  }} 1,
$$
and $\cB = (\mathbbm{1}_{(r,k)=1})$ both supported on the interval $[1,x]$.

Let $B$ and $C$ be as stated in Theorem~\ref{uncond result} and let $A = 3B +C +3$. By Lemma~\ref{PNT bound}, we assert
$$
\max_{\substack{\chi \in \mathcal{X}_k^*}} \bigg |\sum_{p \le y} \chi(p) \bigg |\ll  \frac{k^{\frac{1}{2}}y}{(\log y)^{A}}.
$$
With
$$
\Delta (k,y) = \frac{k^{\frac{1}{2} }y}{ (\log y)^{A} },
$$
we obtain Type I estimate by applying Lemma~\ref{type I} in order to get
$$
\sum_{\substack{c_{mn} \in \cA \\ m \le M }} a_m c_{mn} = \frac{ \# \mathcal{P}_k(y)}{\varphi(k)} \sum_{\substack{mn \in \cB \\ m \le M }} a_m   + O (R_1 ).
$$
Here
$$
R_1 =  \frac{M^{1+o(1)} k^{1 +o(1)} y}{ (\log y)^{3B+C+3} } \ll \frac{M^{1+o(1)}y}{(\log y)^{B+C+3}} \ll \frac{x y}{ \varphi(k) \log^{C+3} y},
$$
whenever $M \ll x^{1-\varepsilon}$ for any fixed $\varepsilon >0$ and $x$ sufficiently large. We have used the assumption $k \ll \log^B y$.

To obtain our Type II estimate, we apply Lemma~\ref{type II}  at most $\log x$ times to get
\begin{align*} 
\sum_{\substack{c_{mn} \in \cA \\ m \le M }} a_m b_n c_{mn}  = & \frac{\# \mathcal{P}_k(y)}{\varphi(k)}  \sum_{\substack{mn \in \cB \\ m \le M }}  a_m  b_n   +  O  (R_2 ),
\end{align*}
where
\begin{align*}
R_2 & =  \frac{k^{1/2} y}{ (\log y)^{3B+C +3 }} \bigg ( \frac{x}{k} + \frac{M^{\frac{1}{2}} x^{\frac{1}{2}}}{k^{\frac{1}{2}}} + \frac{x}{M^{\frac{1}{2}} k^{\frac{1}{2}}} + x^{\frac{1}{2}} \bigg ) (xM)^{o(1)}  \\
&  \ll  \frac{x y}{ \varphi(k) \log^{C+3} y}
\end{align*}
whenever $1 \ll M \ll x$.

In view of applying Proposition~\ref{Harman sieve}, let us set the following parameters:
\begin{align*}
\lambda &= \# \mathcal{P}_k(y)/\varphi(k), \\
\alpha &=\mbox{ sufficiently small  $\varepsilon >0$} , \\
\beta & \in (0, 1/2 ],  \\
M & = x^{1-\varepsilon},  \\
Y & = \frac{xy}{ \varphi(k) \log^{C+3} y},  \\
 R & =1   \mbox{ and $c_1=1$}. 
\end{align*}

Considering the above, we obtain our Type I and II estimate~\eqref{eq:type I (harman)} and~\eqref{eq:type II (harman)}.
Clearly $x^{\alpha} < M$ as $\varepsilon$ is sufficiently small, therefore by appealing to Harman sieve (Proposition~\ref{Harman sieve}) we get
$$
\cS(\cA,x^{\beta}) = \lambda \cS(\cB, x^{\beta}) + O (Y \log^3 x    )
$$
and the result follows.

\section{Proof of Theorem~\ref{GRH result}}

We recall that $k > 2$ is an integer and we define the sequences $\cA = ( c_r)$ by
$$
c_{r} = \sum_{\substack{p \le y, (p,k)=1 \\ r \equiv a \bar{p}_k \Mod{k}  }} 1,
$$
and $\cB = (\mathbbm{1}_{(r,k)=1})$ both supported on the interval $[1,x]$.

By Lemma~\ref{GRH bound}, we have
$$
\max_{\substack{\chi \in \mathcal{X}_k^*}} \bigg |\sum_{p \le y} \chi(p) \bigg | \ll  y^{ \frac{1}{2} } \log(k y).
$$
Set
$$
\Delta (k,y) = y^{ \frac{1}{2} } \log(k y).
$$
By Lemma~\ref{type I}, we get our Type I estimate
$$
\sum_{\substack{c_{mn} \in \cA \\ m \le M }} a_m c_{mn} = \frac{ \# \mathcal{P}_k(y)}{\varphi(k)} \sum_{\substack{mn \in \cB \\ m \le M }} a_m   + O (R_1 ),
$$
where
$$
R_1 = y^{\frac{1}{2}} \log(ky)  M^{1+o(1)} k^{\frac{1}{2} +o(1)} \ll  \frac{ x^{1 - \varepsilon + o(1)} y}{ \varphi(k) },
$$
whenever $M \ll x^{1+\frac{1}{2}(\vartheta_1 - 3\vartheta_2) -\varepsilon +o(1) }$.

By Lemma~\ref{type II}, we get estimate Type II
$$
\sum_{\substack{c_{mn} \in \cA \\ m \sim M }} a_m b_n c_{mn} = \frac{ \# \mathcal{P}_k(y)}{\varphi(k)} \sum_{\substack{mn \in \cB \\ m \sim M }} a_m b_n  + O (R_2 ).
$$
Here
\begin{align*}
R_2 & = y^{ \frac{1}{2} } (\log k y) \bigg ( \frac{x}{k} + \frac{M^{\frac{1}{2}} x^{\frac{1}{2}}}{k^{\frac{1}{2}}} + \frac{x}{M^{\frac{1}{2}} k^{\frac{1}{2}}} + x^{\frac{1}{2}} \bigg ) (xM)^{o(1)}.
\end{align*}
Since $2\vartheta_2 < 1 + \vartheta_1$, we obtain
$$
R_2 \ll \frac{x^{1 - \varepsilon + o(1)} y}{\varphi(k)},
$$
whenever
$$
 x^{\vartheta_2 - \vartheta_1 + 2\varepsilon +o(1) } \ll M \ll x^{1+\vartheta_1 -\vartheta_2 -2 \varepsilon +o(1)}.
$$

In view of applying Proposition~\ref{Harman sieve}, let us set the following parameters:
\begin{align*}
\lambda & = \# \mathcal{P}_k(y)/\varphi(k), \\
 \alpha & = \vartheta_2 - \vartheta_1 + 2\varepsilon +o(1),\\
  \beta & \in (0, 1/2 ] \mbox{ with $\beta <  1 + 2(\vartheta_1 - \vartheta_2 - 2 \varepsilon$)}, \\
  M &= x^{1+\frac{1}{2}(\vartheta_1 - 3\vartheta_2) -\varepsilon +o(1) }, \\
   Y & = x^{1-\varepsilon +o(1) }y k^{-1}, \\
   R& =1  \mbox{ and $c_1=1$.} 
\end{align*}

\overfullrule=0pt Considering the above, we obtain our Type I and II estimate~\eqref{eq:type I (harman)} and~\eqref{eq:type II (harman)}.
Our assumption $5\vartheta_2 < 2 + 3\vartheta_1$ implies $x^{\alpha} < M$ for $x$ sufficiently large. Therefore by the Harman sieve (Proposition~\ref{Harman sieve}) we assert
$$
\cS(\cA,x^{\beta}) = \lambda \cS(\cB, x^{\beta}) + O  (  Y  ),
$$
where the $\log^3 x$ term is absorbed into $Y$.

\section{Proof of Theorem~\ref{GRH lower bound}}

We recall that $k > 2$ is an integer and we define the sequences $\cA = ( c_r)$ by
$$
c_{r} = \sum_{\substack{p \le y, (p,k)=1 \\ r \equiv a \bar{p}_k \Mod{k}  }} 1,
$$
and $\cB = (\mathbbm{1}_{(r,k)=1})$ both supported on the interval $[1,x]$.

Let $\varepsilon' >0$ with $\varepsilon = 3\varepsilon '$ so that $y = x^{\varepsilon} =x^{3\varepsilon'}$.
By the proof of Theorem~\ref{GRH result} (take $\varepsilon = \varepsilon'$ there), we have satisfactory Type I estimate as long as 
$$
M \ll x^{1 - \frac{3}{2} \delta}.
$$
Moreover, the Type II estimate remains valid when
$$
 x^{\delta} \ll  M \ll x^{1 -\delta}.
$$

Let us set 
$$\lambda = \# \mathcal{P}_k(y)/\varphi(k)$$
and 
$$Y = \frac{ x^{1 - \varepsilon + o(1)} y}{ k}.$$

$\bullet$ Assume $\delta \in \left [ \frac{1}{4} , \frac{1}{3} \right  )$.

Write $X=x^{\frac{1}{2}}$, $z = x^{1- 2\delta}$ and note that $z \le X$. Applying the Buchstab identity (Lemma~\ref{buchstab identity}), we assert
\begin{align*}
\cS( \cA, X ) & =  \cS(\cA, z) - \sum_{ z \le  p < X} \cS(\cA_p,p)   = \Sigma_1 - \Sigma_2, \mbox{ say}.
\end{align*}

By the Harman sieve, we get
$$
\Sigma_1 = \lambda \cS(\cB,z) +O(Y). 
$$

We have $x^{\delta} \le z < X \le x^{1 -\delta}$ since $\delta <1/2$, and $z^3 > x$ since $\delta < 1/3$. Therefore we may write $\Sigma_2$ as a Type II sum and  obtain
\begin{align*}
\Sigma_2 & = \sum_{z \le p < X} \cS(\cA_p,z)  = \lambda \sum_{z \le p < X} \cS(\cB_p, z ) +O(Y).
\end{align*}
Hence in total, we get
$$
\cS( \cA, X ) = \lambda \cS( \cB, X ) + O(Y).
$$
Note that  $\cS(\cB,X) = \Phi_k(x,x^{1/2}) = \Phi(x,x^{1/2})$ since the $k \ll x^{1/3}$.\newline

$\bullet$ Assume $\delta \in [ \frac{1}{3} , \frac{2}{5} )$.
	
	 Write $X=x^{\frac{1}{2}}$, $z = x^{1- 2\delta}$, $T= x^{\delta}$. Then by Buchstab identity
\begin{align*}
\cS( \cA, X ) & =  \cS(\cA, z) - \sum_{ z \le  p < X} \cS(\cA_p,p) \\
& =  \cS(\cA, z) - \sum_{z \le p < T} \cS(\cA_p,p) - \sum_{T \le p < X} \cS(\cA_p,p) \\
& = \Sigma_1 - \Sigma_2 - \Sigma_3 \mbox{, say}.
\end{align*}

The sums $\Sigma_1$ and $ \Sigma_3$ can be estimated as above. For $\Sigma_2$, we apply the Buchstab identity to get
$$
\Sigma_2 = \sum_{z \le p < T} \cS(\cA_p,z) - \sum_{\substack{ z \le q < p < T }} \cS(\cA_{pq},q) = \Sigma_4 - \Sigma_5, \mbox{ say}.
$$

The sum $\Sigma_4$ can be estimated by Harman sieve since $\delta <2/5$. We split $\Sigma_5$ and write
$$
\Sigma_5 = \sum_{\substack{ z \le q < p < T \\ pq \le x^{1-\delta} }} \cS(\cA_{pq},q) + \sum_{\substack{ z \le q < p < T \\ pq > x^{1-\delta} }} \cS(\cA_{pq},q) = \Sigma_6 + \Sigma_7, \mbox{ say}.
$$
Since $T \le z^2 \le pq \le x^{1- \delta }$, the sum $\Sigma_6$ can be estimated as a Type II sum. Indeed let us write
\begin{align*}
\Sigma_6 & = \sum_{\substack{ z \le q < p < T \\ pq \le x^{1-\delta} }}  \sum_{\substack{rpq\le x \\ (r,P(q))=1 }} c_{rpq} \\
& = \sum_{\substack{c_{mn} \in \cA \\ x^{\delta} < m \le x^{1-\delta} }}  \Bigg ( \sum_{\substack{m=pq \\ z \le q < p < T}}  \mathbbm{1}_{P^{-}(n) > q-0.5}   \Bigg ) c_{mn}.
\end{align*}
Recall the truncated Perron formula~\cite[Lemma 2.2]{H2} for
$$
\frac{1}{\pi} \int_{-T}^{T} e^{i \gamma t} \frac{\sin \rho t}{t} \diff t =
\begin{cases}
1 + O (T^{-1} (\rho - |\gamma|)^{-1} ) & \mbox{ if $|\gamma| < \rho$,} \\
O (T^{-1} (\rho - |\gamma|)^{-1} )  & \mbox{ if $|\gamma| > \rho$.} 
\end{cases}
$$
Applying this with $\rho =\log  P^-(n)$, $\gamma = \log (q-0.5)$, $T=x^2 \lambda^{-1}$, we get
$$
\Sigma_6 = M + R,
$$
where
\begin{align*}
M & = \frac{1}{\pi} \int_{-T}^{T}    \sum_{\substack{c_{mn} \in \cA \\ x^{\delta} < m \le x^{1-\delta} }}  \Bigg ( \sum_{\substack{m=pq \\ z \le q < p < T}}   e^{i\gamma t}   \Bigg ) \frac{ \sin(\rho t)}{t} c_{mn} \diff t, \\
R & \ll \frac{x^{o(1)}}{T}\sum_{\substack{c_{mn} \in \cA \\ x^{\delta} < m \le x^{1-\delta} }}  \frac{ c_{mn}}{|\log (P^-(n)) - \log (q-0.5)|}.
\end{align*}

By the mean value theorem
$$
\frac{1}{|\log (P^-(n)) - \log(q-0.5)|}  = \frac{\eta}{| P^-(n) - q - 0.5|},
$$
where $\eta \in [P^-(n) , q-0.5]$ or $\eta \in [q-0.5, P^-(n)]$. In any case
$$
\frac{1}{|\log P^-(n) - \log q|}   \ll \max \{n,q\} \le x.
$$
Recall that
$$
 c_{mn} = \sum_{\substack{p \le y, (p,k)=1 \\ mn \equiv a \bar{p}_k \Mod{k} }} 1 \le y  = x^{\varepsilon},
$$
and therefore $R$ is bounded by
$$
 \ll \frac{1}{T} x^{2+\varepsilon+o(1)} \ll \lambda x^{\varepsilon + o(1)}.
$$

We return our attention to $M$. Note that the integral in the main term between $-1/T$ and $1/T$ can be trivially bounded by 
$$
\ll T^{-1} x^{1+ \varepsilon +o(1)} \ll \lambda.
$$
 Applying our Type II estimate over the region 
$$
\mathcal{R}(T) = (-T,-1/T) \cup (1/T,T),
$$
we obtain
\begin{align*}
M &= \frac{\lambda}{\pi} \int_{\mathcal{R}(T)}   \sum_{\substack{mn \in \cB \\ x^{\delta} < m \le x^{1-\delta} }}  \Bigg ( \sum_{\substack{m=pq \\ z \le q < p < T}}  e^{i\gamma t}   \Bigg ) \frac{ \sin(\rho t)}{t} \diff t + O\left (Y\int_{1/T}^{T} \frac{\diff t}{t} + \lambda  \right ) \\
 & = \frac{\lambda}{\pi} \int_{-T}^T   \sum_{\substack{mn \in \cB \\ x^{\delta} < m \le x^{1-\delta} }}  \Bigg ( \sum_{\substack{m=pq \\ z \le q < p < T}}   e^{i\gamma t}   \Bigg ) \frac{ \sin(\rho t)}{t} \diff t + O(Y ) \\
 & = \lambda \sum_{\substack{ z \le q < p < T \\ pq \le x^{1-\delta} }} \cS(\cB_{pq},q)   + O(Y),
\end{align*}
where the last line follows from the truncated Perron formula. Therefore in total we have
$$
\cS( \cA, X )  = \lambda( \Sigma_1^* - \Sigma_3^* - \Sigma_4^* + \Sigma_6^*) + \Sigma_7 +O(Y),
$$
where $\Sigma_j^*$ is $\Sigma_j$ with $\cA$ replaced by $\cB$. We drop by positivity of $\Sigma_7$ and obtain
\begin{align} \label{S lower bound}
\cS( \cA, X )  & \ge \lambda(\Sigma_1^* - \Sigma_3^* - \Sigma_4^* + \Sigma_6^* + \Sigma_7^* - \Sigma_7^* )  +O(Y) \\
& = \lambda \cS( \cB, X ) - \lambda \Sigma_7^* +O(Y). \nonumber
\end{align}

We note that if $pq^2 >  x$ then $\Sigma_7^*$ is zero, therefore we write
$$
\Sigma_7^* =\sum_{\substack{ z \le q < p < T \\ x^{1-\delta}/q < p \le x/q^2  }} \cS(\cB_{pq},q).
$$
Consider the summand in $\Sigma_7^*$. Choose $u$ so that $q = (x/pq)^{1/u}$ and it follows  by Lemma~\ref{count:buch} 
\begin{align*}
\cS( \cB_{pq}, q ) & \sim \frac{x}{pq \log q} \omega \left (  \frac{\log (x/pq)}{\log q} \right ) \le \frac{x}{pq \log q}.
\end{align*}
since the Buchstab function is bounded by 1.

Hence
\begin{align*}
\Sigma_7^*  
& \le (1+o(1))  \sum_{\substack{ z \le q < p < T \\ x^{1-\delta}/q < p \le x/q^2  }}  \frac{x}{pq \log q} \\
& = (1+o(1))  x \sum_{z \le q < T} \frac{1}{q\log q} \sum_{\max \{q,x^{1-\delta}/q \} < p < \min\{ T, x/q^2\} } \frac{1}{p}  \\
&  \le (1+o(1)) x \sum_{z \le q < T} \frac{ \log \log(x^{2/5}) - \log \log(q)  }{q\log q},
\end{align*}
where the last line follows from Mertens estimate and that $\delta \in [1/3,2/5)$. By partial summation, we find
\begin{align*}
\Sigma_7^*  &  \le (1+o(1)) x \int_{z}^{T} \frac{ \log\log (x^{2/5}) - \log\log (q)  }{q (\log q)^2} \diff q \\
 & = (1+o(1))  \frac{x}{\log x} \int_{1-2\delta}^{\delta} \frac{\log(2/5) - \log(\alpha)}{\alpha^2} \diff \alpha \\
& \le (1+o(1))  \frac{x}{\log x} (\log(32) - 5/2)
\end{align*}
by the substitution $q = x^{\alpha}$ with $(\log x) \diff \alpha  = \diff q/q$, and noting that $\delta \in [1/3,2/5)$.

Since
$
\cS(\cB,X) = (1+o(1))x/ \log(x)
$
for $k \ll x^{2/5}$, and appealing to~\eqref{S lower bound}, we have
\begin{align*}
\cS(\cA,X) & \ge \lambda \frac{x}{\log x} (1 - \log(32) +5/2 +o(1) )  +O(Y) \\
& \ge  \frac{ (0.0342 +o(1)) xy}{ \varphi(k) (\log x )(\log y)}
\end{align*}
as $x \rightarrow \infty$.

\section*{Acknowledgement}

The author thanks I. E. Shparlinski and L. Zhao for helpful comments. This work is supported by an Australian Government Research Training Program (RTP) Scholarship, UNSW PhD Writing Scholarship, and the Lift-off Fellowship of AustMS. The author is also grateful to the referee for their excellent comments which improved the presentation of the article.

\end{document}